\newtheorem{theorem}{Theorem}[section]
\newtheorem{definition}[theorem]{Definition}
\newtheorem{corollary}[theorem]{Corollary}
\newtheorem{example}[theorem]{Example}
\newcommand{\Dd}{\widehat{\mathbb{D}}}
\newcommand{\codim}{\operatorname{codim}}
\begin{document}

\title{Variations of Weyl's tube formula}

\author[A.~Burtscher]{Annegret Burtscher$^\dagger$}
\author[G.~Heckman]{Gert Heckman$^\dagger$}

\makeatletter
\@namedef{subjclassname@2020}{
  \textup{2020} Mathematics Subject Classification}
\makeatother
\subjclass[2020]{53A07 (primary), 53A55, 20G05
(secondary)}
 
\keywords{Tubes, volumes, curvature, invariant theory, finite reflection groups}
\thanks{$^\dagger$Department of Mathematics, IMAPP, Radboud University Nijmegen, 
The Netherlands, \textit{Emails}: \texttt{burtscher@math.ru.nl}, \texttt{g.heckman@math.ru.nl}.\\
\indent \emph{Acknowledgements:} Research of the first author supported by the Dutch Research Council
(NWO), Project number VI.Veni.192.208. Part of this material is based upon
work supported by the Swedish Research Council under grant no.\ 2016-06596
while the first author was in residence at Institut Mittag-Leffler in Djursholm,
Sweden in the Fall of 2019.}

\maketitle

\begin{abstract}
In 1939 Weyl showed that the volume of spherical tubes around compact 
submanifolds $M$ of Euclidean space depends solely on the induced Riemannian 
metric on $M$. Can this intrinsic nature of the tube volume be preserved for 
tubes with more general cross sections $\mathbb{D}$ than the round ball? 
Under sufficiently strong symmetry conditions on $\mathbb{D}$ the answer turns 
out to be yes.
\end{abstract}

\section{Introduction}\label{Introduction}

Let us be given a compact connected manifold $M$ (possibly with a boundary) 
of dimension $n$ embedded in $\mathbb{R}^{n+m}$ as submanifold of codimension 
$m$. For each $r\in M$ we have an orthogonal decomposition $T_rM\oplus N_rM$ 
of $\mathbb{R}^{m+n}$ into tangent space and normal space at $r$ of $M$. 
It was shown by Weyl~\cite{Weyl 1939a} that the Euclidean volume 
of the spherical tube
\[ \{r+n \, ; r\in M, \, n\in N_rM, \, |n|\leq a\} \]
around $M$ with radius $a>0$ sufficiently small is equal to
\[ V_M(a)=\Omega_m\;\sum_{d=0}^n\frac{k_d(M)\,a^{m+d}}{(m+2)\cdots(m+d)}
   \qquad (d\;\mathrm{even}) \] 
with $\Omega_m$ the volume of the unit ball 
$\mathbb{B}^m=\{t \, ; |t|\leq1\}$ in $\mathbb{R}^m$. 

The remarkable insight of Weyl is that the coefficients $k_d(M)$ are 
integral invariants of $M$ only determined by the \emph{intrinsic} 
metric nature of $M$. For example, the initial coefficient 
$k_0(M)=\int_M ds$ is the Riemannian volume of $M$ and the next 
coefficient is $k_2(M)=\tfrac12\int_MS\,ds$ with $S$ the scalar 
curvature of $M$. If $M$ has empty boundary and is of even dimension it 
was proved by Allendoerfer and Weil~\cite{Allendoerfer--Weil 1943} in 
their approach towards the Gauss--Bonnet theorem that the top 
coefficient $k_n(M)=(2\pi)^{n/2}\chi(M)$ with $\chi(M)$ the Euler 
characteristic of $M$ is even of topological nature. See also the text 
books of Gray on tubes~\cite{Gray 2004} and of Morvan on
generalized curvatures~\cite{Morvan 2008} for further details.

Due to the \emph{local} nature of the tube formula we can assume that 
the submanifold $M$ of $\mathbb{R}^{n+m}$ comes with a chosen orthonormal 
frame in the normal bundle $NM$ of $M$ in $\mathbb{R}^{n+m}$. In turn this 
gives for all $r\in M$ an identification of the normal space $N_rM$ with 
$\mathbb{R}^m$, and so for $\mathbb{D}^m$ a compact domain 
around $0$ in $\mathbb{R}^m$ we can consider the \emph{generalized tube}
\[ \{r+n \, ; r\in M, \, n\in a\mathbb{D}^m\} \] 
around $M$ of type $a\mathbb{D}^m$ for $a>0$ sufficiently small. The main result of 
this paper is that under sufficiently strong (relative to the dimension 
$n$ of $M$) symmetry requirements on the domain $\mathbb{D}^m$ a similar 
intrinsic formula for the volume $V(a)$ of the above generalized tube 
remains valid as in Weyl's case where $\mathbb{D}^m$ equals the unit ball 
$\mathbb{B}^m$.

Our generalized tubes share the feature that 
the domains $\mathbb{D}^m$ are invariant under the following subgroups of 
the orthogonal group.

\begin{definition}\label{orthogonal of degree n definition}
A subgroup $G_m$ of the orthogonal group $\mathrm{O}_m(\mathbb{R})$ on 
$\mathbb{R}^m$ is called \emph{orthogonal of degree $n$} if any polynomial 
$p(t)\in\mathbb{R}[t]$ on $\mathbb{R}^m$ of degree $\leq n$ that is invariant 
under $G_m$ is in fact invariant under the full orthogonal group 
$\mathrm{O}_m(\mathbb{R})$.
\end{definition}
 
Our principal result is the following generalized tube formula.

\begin{theorem}\label{abstract tube formula}
Let $M$ be a compact connected manifold of dimension $n$ embedded in Euclidean
space $\mathbb{R}^{n+m}$. 
If the compact domain $\mathbb{D}^m$ around $0$ in $\mathbb{R}^m$ has a 
symmetry group $G_m$ inside $\mathrm{O}_m(\mathbb{R})$ that is orthogonal 
of degree $n$ then the volume of the generalized tube of type 
$a\mathbb{D}^m$ for $a>0$ sufficiently small is given by
\[ V_M(a)= \sum_{d=0}^n 
   \frac{\{\int_{\mathbb{D}^m}|t|^d\,dt\}\,k_d(M)\,a^{m+d}}{m(m+2)\cdots(m+d-2)} 
   \qquad (d\;\mathrm{even}) \]
with intrinsic coefficients $k_d(M)=\int_M H_d\,ds$ as specified in 
Theorem~\ref{integrand average theorem}.  
\end{theorem}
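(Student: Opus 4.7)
The plan is to follow Weyl's route — express the tube volume as an iterated integral of the Jacobian of the tube map — but at the step where Weyl uses the full rotational invariance of $\mathbb{B}^m$, instead invoke an averaging argument that exploits only the $G_m$-symmetry of $\mathbb{D}^m$ together with Definition~\ref{orthogonal of degree n definition}.

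First I would parametrize the generalized tube by the smooth map $\Phi\colon M\times a\mathbb{D}^m\to\mathbb{R}^{n+m}$, $\Phi(r,n)=r+n$, where $n$ is identified with a normal vector via the fixed orthonormal normal frame. For $a>0$ sufficiently small, $\Phi$ is a diffeomorphism onto the tube and the standard Fermi-coordinate computation shows that its Euclidean Jacobian equals $\det(I-A_n)$, with $A_n\in\operatorname{End}(T_rM)$ the Weingarten shape operator in the direction $n$. Performing the linear substitution $n=at$ (which contributes $a^m$) and expanding
\[ \det(I-aA_t)=\sum_{d=0}^{n}(-a)^{d}\operatorname{tr}(\wedge^{d}A_t)=\sum_{d=0}^{n}a^{d}J_{d}(r,t), \]
where $J_{d}(r,\cdot)$ is a polynomial on $\mathbb{R}^m\cong N_rM$ of homogeneous degree $d\le n$, gives
\[ V_M(a)=\sum_{d=0}^{n}a^{m+d}\int_{M}\!\!\int_{\mathbb{D}^m}J_{d}(r,t)\,dt\,ds. \]

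The decisive step is to average the inner integrand over $G_m$. Because both $\mathbb{D}^m$ and its Lebesgue measure are $G_m$-invariant, replacing $J_d(r,t)$ by its $G_m$-average $\overline{J_d}(r,t)$ does not change the integral. But $\overline{J_d}(r,\cdot)$ is a $G_m$-invariant polynomial of total degree at most $n$, so by Definition~\ref{orthogonal of degree n definition} it is in fact $\mathrm{O}_m(\mathbb{R})$-invariant. Since the algebra of $\mathrm{O}_m(\mathbb{R})$-invariant polynomials on $\mathbb{R}^m$ is generated by $|t|^2$, a homogeneous $\mathrm{O}_m$-invariant of degree $d$ is a scalar multiple of $|t|^d$ when $d$ is even and is identically zero when $d$ is odd. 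Writing this scalar as $H_d(r)/\bigl(m(m+2)\cdots(m+d-2)\bigr)$ — the pointwise normalization identified in Theorem~\ref{integrand average theorem} — yields
\[ \int_{\mathbb{D}^m}J_d(r,t)\,dt=\frac{H_d(r)}{m(m+2)\cdots(m+d-2)}\int_{\mathbb{D}^m}|t|^d\,dt \]
for even $d$, and $0$ for odd $d$. Integrating over $M$ and summing gives the claimed formula.

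The main obstacle is not the averaging argument above, which is essentially formal once the orthogonal-of-degree-$n$ hypothesis is in hand, but the identification of the scalar $H_d(r)$ as a function only of the intrinsic Riemannian geometry of $(M,g)$. This intrinsicness is exactly the content of Theorem~\ref{integrand average theorem}, and its proof would proceed by applying the Gauss equation to convert the contractions of the second fundamental form appearing in $\operatorname{tr}(\wedge^{d}A_t)$ into traces of the Riemann curvature tensor of $M$, in parallel with Weyl's original treatment. Granting that auxiliary theorem, the generalized tube formula is a transparent consequence of the invariant-theoretic principle isolated in Definition~\ref{orthogonal of degree n definition}.
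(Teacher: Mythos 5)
Your proposal is correct and takes essentially the same route as the paper: both replace the integrand by its $G_m$-average (harmless since $\mathbb{D}^m$ and Lebesgue measure are $G_m$-invariant), use the orthogonal-of-degree-$n$ hypothesis to upgrade that average to an $\mathrm{O}_m(\mathbb{R})$-invariant polynomial, and then invoke Theorem~\ref{integrand average theorem} to identify the resulting multiple of $|t|^d$ with the intrinsic coefficient $H_d/\bigl(m(m+2)\cdots(m+d-2)\bigr)$. The paper packages the step ``$G_m$-average equals $\mathrm{O}_m$-average'' via Fubini on compact groups, which you leave implicit when asserting that the scalar is the one from Theorem~\ref{integrand average theorem}, but the substance is identical.
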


In Sections~\ref{The volume of tubes}--\ref{Averaging the integrand} we shall
review the proof of the tube formula following 
Weyl's original approach and along the way obtain variations of the 
tube volume formula for polyhedral (and related) tubes rather 
than spherical tubes. 
We discuss several examples in Section~\ref{Polyhedral examples} and 
counterexamples in Section~\ref{No go results for diamant domains}, 
as well as causal tubes in Minkowski space in 
Section~\ref{Riemannian submanifolds of a Lorentzian vector space}.

After this paper was finished we learned that tube volume formulas with
more general cross sections $\mathbb{D}^m$ had already been studied before
by Domingo-Juan and Miquel 
~\cite{Domingo-Juan--Miquel 2004}. We decided to leave our paper as it was,
but add Section~\ref{Pappus type theorems} in order to
briefly survey their approach and compare their results with ours.
 
\section{The volume of tubes}\label{The volume of tubes}

Locally a submanifold $M$ of dimension $n$ in $\mathbb{R}^{n+m}$ is 
given in the Gaussian approach by a parametrization
\[ r\colon U^n\rightarrow M \subset \mathbb{R}^{n+m},\quad u\mapsto r(u) \]
with $u=(u^1,\ldots,u^n)\in U^n\subset\mathbb{R}^n$ and $\partial_i=\partial/\partial u^i$ 
in the usual notation of differential geometry. A summation sign $\sum$ without explicit
mention of indices always means a summation over all indices, which occur both as 
upper and as lower index. The first fundamental form (or Riemannian metric) is given by
\[ ds^2=\sum g_{ij}du^idu^j,\quad g_{ij}=\partial_ir\cdot\partial_jr, \]
with $\cdot$ the scalar product on the ambient Euclidean space $\mathbb{R}^{n+m}$
and $g_{ij}=g_{ij}(u)$ a positive definite symmetric matrix for all $u\in U^n$. 

Let us choose an orthonormal frame field $u\mapsto n_1(u),\ldots,n_m(u)$ in 
the normal bundle of $M$, and so $\partial_ir\cdot n_p=0$ and 
$n_p\cdot n_q=\delta_{pq}$ along $M$ for all $i=1,\ldots,n$ and 
$p,q=1,\ldots,m$. Let $t=(t^1,\ldots,t^m)$ be Cartesian coordinates on
$\mathbb{R}^m$. Let us be given a compact domain $\mathbb{D}^m$
around $0$ in $\mathbb{R}^m$ such that the map 
\[
 x \colon U^n\times\mathbb{D}^m \to\mathbb{R}^{n+m}, \quad 
 (u,t) \mapsto x(u,t)=r(u)+\sum t^pn_p(u)
\] 
is a diffeomorphism of $U^n\times\mathbb{D}^m$ onto its image in
$\mathbb{R}^{n+m}$. This image is called a tube of type $\mathbb{D}^m$ around $r(U^n)$. 

We are interested in the Euclidean volume $V_{U^n}(a)$ of the local tube 
\[ \{x(u,t)=r(u)+\sum t^pn_p(u) \, ; u\in U^n,\;t\in a\mathbb{D}^m\}
   \subset\mathbb{R}^{n+m} \]
of type $a\mathbb{D}^m$ as a function of a small positive parameter $a>0$. 
By the Jacobi substitution theorem we have
\[ V_{U^n}(a)=\int_{U^n}\{\int_{a\mathbb{D}^m}J(u,t)\,dt\}\,du \]
with $J$ the absolute value of the determinant
\[ \det(\partial_1x\;\cdots\;\partial_nx\;n_1\;\cdots\;n_m ) \]
and $\partial_ix=\partial_ir+\sum t^p\partial_in_p$ for $i=1,\ldots,n$.

Recall that
\[ \partial_i\partial_jr=\sum\Gamma_{ij}^k\partial_kr+
   \sum h_{ij}^pn_p \]
with $\Gamma_{ij}^k=\Gamma_{ij}^k(u)$ the Christoffel symbols and
$h_{ij}^p=h_{ij}^p(u)$ the coefficients of the second fundamental form $h_{ij}$
relative to the orthonormal normal frame $n_p$. Here indices $p,q=1,\ldots,m$
are coordinate indices in the normal direction, while the other indices 
$i,j,k=1,\ldots,n$ are coordinate indices on the submanifold $M$.
Since $\partial_jr\cdot n_p=0$ we get
\[ \partial_in_p\cdot\partial_jr=-n_p\cdot\partial_i\partial_jr=-\sum\delta_{pq}h_{ij}^q \]
with $\delta_{pq}$ the Kronecker symbol. Writing $t_p=\sum\delta_{pq}t^q$ we find
\[ \partial_ix=\partial_ir-\sum \delta_{pq}t^ph_{ik}^qg^{kj}\partial_jr+\ldots=
   \sum(\delta_i^j-\sum t_ph_{ik}^{p}g^{kj})\partial_jr+\ldots \]
with $g_{ij}=\partial_ir\cdot\partial_jr$, $g^{ij}$ its inverse matrix, 
$\det g = \det (g_{ij})$ its determinant and $\ldots$ stands for a linear 
combination of the normal fields $n_p$. If in the usual notation we write 
$h_{i}^{jp}=\sum g^{jk}h_{ik}^p$ for $i,j=1,\ldots,n$ and $p=1,\ldots,m$ then we get
\begin{gather*}
   \det(\partial_1x\;\cdots\;\partial_nx\;n_1\;\cdots\;n_m )= \\
   \det(\delta_i^j-\sum t_ph_{i}^{jp})
   \det(\partial_1r\;\cdots\;\partial_nr\;n_1\;\cdots\;n_m)
\end{gather*}   
which in turn implies
\[ V_{U^n}(a)=\int_{U^n}\{\int_{a\mathbb{D}^m}
   \det(\delta_i^j-\sum t_ph_{i}^{jp})\,dt\}\sqrt{\det g_{ij}} \, du \]
for all $a>0$ sufficiently small. 

For fixed $u\in U^n$ the integrand
$\det(\delta_i^j-\sum t_ph_{i}^{jp})$ is a polynomial in $t$ 
of degree $n$ and so, after integration and patching together the locally 
defined tubes, we conclude that
\[ V_M(a)=\sum_{d=0}^n v_d\,a^{m+d} \]
is a polynomial in $a$ of degree $m+n$ and with coefficient $v_0=
\mathrm{vol}(\mathbb{D}^m)\mathrm{vol}(M)$. If the domain
$\mathbb{D}^m$ is centrally symmetric with respect the origin, that is if
$-\mathbb{D}^m=\mathbb{D}^m$, then the integrals of odd degree
monomials in $t$ over $\mathbb{D}^m$ vanish and so $v_d=0$ for $d$ odd. 

In order to show that the volume $V_M(a)$ of a generalized tube of type
$\mathbb{D}^m$ depends only on intrinsic quantities of $M$, two steps
are necessary. Firstly, by assuming that $M$ is embedded in flat
$\mathbb{R}^{n+m}$, one observes that certain combinations of the 
second fundamental forms are intrinsic curvature quantities (this 
was already done by Weyl). Secondly, by imposing certain symmetry 
conditions on $\mathbb{D}^m$, we show that only those intrinsic combinations 
remain in the volume formula $V_M(a)$ for the generalized tube (done by 
Weyl for the ball $\mathbb{B}^m$). These steps are carried out in 
Sections~\ref{The Gauss equations} and \ref{Averaging the integrand}, 
respectively.

\section{The Gauss equations}\label{The Gauss equations}

As before, we write
\[ \partial_i\partial_jr=\sum \Gamma_{ij}^k\partial_kr+h_{ij} \]
with $\Gamma_{ij}^k$ the Christoffel symbols given by 
\[ \tfrac{1}{2} \sum g^{kl}(\partial_ig_{jl}+\partial_jg_{il}-\partial_lg_{ij}) \] 
and $h_{ij}=\sum h_{ij}^pn_p$ the second fundamental form relative 
to the orthonormal frame $n_p$ in the normal bundle along $M$. 
Given scalar functions $g_{ij}$ and $h_{ij}^p$, the integrability 
conditions for the existence of an embedding of $M$ into flat Euclidean space with these functions 
as coefficients of the first and second fundamental forms are given by
\[ \partial_i(\sum \Gamma_{jk}^l\partial_lr+\sum h_{jk}^pn_p)-
   \partial_j(\sum \Gamma_{ik}^l\partial_lr+\sum h_{ik}^pn_p)=0 \]
for all $i,j,k$ (by working out $\partial_i(\partial_j\partial_kr)-
\partial_j(\partial_i\partial_kr)=0$).

In the normal directions this leads to the Codazzi--Mainardi equations
\[ \partial_ih_{jk}^p-\partial_jh_{ik}^p+
   \sum(\Gamma_{jk}^lh_{il}^p-\Gamma_{ik}^lh_{jl}^p)=0 \]
for all $i,j,k$ and all $p$. In the tangential directions this amounts
to the Gauss equations
\[ R_{kij}^l=\sum \delta_{pq}g^{ln}(h_{in}^ph_{jk}^q-h_{jn}^ph_{ik}^q) \]
for all $i,j,k,l$ with
\[ R_{kij}^l=\partial_i\Gamma_{kj}^l-\partial_j\Gamma_{ki}^l+
   \sum (\Gamma_{kj}^m\Gamma_{mi}^l-\Gamma_{ki}^m\Gamma_{mj}^l) \]
the coefficients of the Riemann curvature tensor. As mentioned earlier, by 
raising indices $h_{i}^{jp}=\sum g^{jk}h_{ki}^p$ and 
$R_{ij}^{kl}=\sum g^{ln}R_{nij}^k$ the Gauss equations take the form
\[ R_{ij}^{kl}=\sum \delta_{pq}(h_{i}^{kp}h_{j}^{lq}-h_{j}^{kp}h_{i}^{lq})=
   h_i^{k}\cdot h_j^l-h_j^k\cdot h_i^l \]
for all $i,j,k,l$ and $h_i^j=\sum h_i^{jp}n_p=\sum g^{jk}h_{ki}$ normal vectors along $M$.

\section{Averaging the integrand}
\label{Averaging the integrand}

For $p(t)\in\mathbb{R}[t_1,\cdots,t_m]$ a polynomial on Euclidean space 
$\mathbb{R}^m$ and $G_m$ a closed subgroup of the orthogonal group 
$\mathrm{O}_m(\mathbb{R})$ let us write
\[ \langle p(t)\rangle_{G_m}=\int_{G_m}\;p(gt)\,d\mu(g)\]
for the average of $p$ over $G_m$, with $\mu$ the normalized Haar measure on $G_m$. 
Clearly
\[ \langle p(t)\rangle_{\mathrm{O}_m(\mathbb{R})}\in\mathbb{R}[t\cdot t] \]
with $t\cdot t=|t|^2$ the norm squared of $t\in\mathbb{R}^m$.
The crucial step for the intrinsic nature of the coefficients of
the tube volume formula is the following result
(see the Lemma on page 470 of Weyl's paper \cite{Weyl 1939a}).

\begin{theorem}\label{integrand average theorem}
We have (with $1\leq i,j\leq n$ and $1\leq p\leq m$)
\[ \langle\det(\delta_i^j-\sum t_ph_{i}^{jp})
   \rangle_{\mathrm{O}_m(\mathbb{R})}=
   \sum_{d=0}^n\frac{H_d\,|t|^d}{m(m+2)\cdots(m+d-2)} \] 
with $H_d$ intrinsic functions on $M$ given by
\begin{align*}
 H_d = 
 \begin{cases}
  0 & \text{if}~d~\text{odd}, \\
  1 & \text{if}~d=0, \\
  \sum \varepsilon_{i_1\ldots i_d}^{j_1\ldots j_d}\;
   R_{i_1i_2}^{j_1j_2}\cdots R_{i_{d-1}i_d}^{j_{d-1}j_d} & \text{if}~d>0~\text{even},
 \end{cases}
\end{align*}
and
\[ R_{ij}^{kl}=H_{ij}^{kl}-H_{ji}^{kl}\;\;,\;\; 
   H_{ij}^{kl}=h_i^k\cdot h_j^l=\sum \delta_{pq}h_i^{kp}h_{j}^{lq} \]
for $i,j,k,l=1,\ldots,n$. 
In the expression for $H_d$ with $d>0$ even the sum runs over all cardinality $d$ subsets 
$\mathcal{D}$ of $\{1,\ldots,n\}$ and over all possible couplings of pairs
\[ _{i_1i_2}^{j_1j_2}\,|\, _{i_3i_4}^{j_3j_4}\,|\,\cdots\,|_{i_{d-1}i_d}^{j_{d-1}j_d}\,| \]
taken from $\mathcal{D}=\{i_1,\dots,i_d\}=\{j_1,\dots,j_d\}$. Here a pair $i_1i_2$
means two distinct numbers $i_1,i_2$ irrespective of their order.
\end{theorem}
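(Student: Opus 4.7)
The plan is to follow Weyl's route from~\cite{Weyl 1939a}. First, I would expand the determinant by means of the generalized Kronecker delta identity
\[ \det(\delta_i^j + A_i^j) = \sum_{d=0}^n \frac{1}{d!} \sum \varepsilon_{i_1\ldots i_d}^{j_1\ldots j_d}\, A_{i_1}^{j_1} \cdots A_{i_d}^{j_d} \]
applied to $A_i^j = -\sum_p t_p h_i^{jp}$. This isolates, at each fixed degree $d$, the $t$-dependence as a monomial $t_{p_1} \cdots t_{p_d}$ coupled to a tensor contraction of $\varepsilon$ with the second fundamental form components $h_{i_k}^{j_k p_k}$.

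Next, I would apply a classical orthogonal moment formula to this monomial. By $\mathrm{O}_m(\mathbb{R})$-invariance combined with symmetry in the indices, the only admissible structure for the average is
\[ \langle t_{p_1} \cdots t_{p_{2e}} \rangle_{\mathrm{O}_m(\mathbb{R})} = c_{m,e}\, |t|^{2e} \sum_{\pi} \prod_{(a,b) \in \pi} \delta_{p_a p_b}, \]
a sum over the $(2e-1)!!$ perfect matchings $\pi$ of $\{1,\ldots,2e\}$; contracting both sides with a chain of $\delta$'s pins down the normalization $c_{m,e} = 1/(m(m+2)\cdots(m+2e-2))$. The odd moments vanish by the symmetry $t \mapsto -t$, which already delivers $H_d = 0$ for odd $d$. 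Substituting back, each $\delta_{p_a p_b}$ contracts the corresponding pair of $h$-factors into $h_{i_a}^{j_a} \cdot h_{i_b}^{j_b} = H_{i_a i_b}^{j_a j_b}$.

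The decisive step is the recognition of intrinsic Riemannian curvature. The alternating tensor $\varepsilon_{i_1\ldots i_d}^{j_1\ldots j_d}$ is fully antisymmetric in its lower (and separately in its upper) indices, so each factor $H_{i_a i_b}^{j_a j_b}$ occurring inside the $\varepsilon$-contraction may be replaced by its antisymmetrization in $(i_a, i_b)$, namely $\tfrac{1}{2} R_{i_a i_b}^{j_a j_b}$, by the Gauss equations of Section~\ref{The Gauss equations}. The averaged determinant therefore depends solely on the Riemann tensor, confirming the intrinsic nature of the coefficients. The part I expect to require the most care is the combinatorial bookkeeping: the $1/d!$ from the determinant expansion, the sum over $(d-1)!!$ pairings from the moment formula, and the factors of $\tfrac{1}{2}$ from the antisymmetrizations must be combined (using the alternation of $\varepsilon$ to relabel dummy indices so that all pairings contribute equally) so that the prefactors telescope precisely to the summation convention in the theorem statement, indexed by $d$-subsets $\mathcal{D} \subset \{1,\ldots,n\}$ together with their pair couplings.
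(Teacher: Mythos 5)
Your proposal is correct, and it reaches the theorem by a genuinely more computational route than the paper. The paper does not expand the determinant or use the explicit moment formula: it averages the characteristic polynomial, invokes the first fundamental theorem of invariant theory for $\mathrm{O}_m(\mathbb{R})$ to conclude abstractly that each coefficient $A_{\mathcal{D}}(h_i^j)$ is a polynomial in the scalar products $H_{ij}^{kl}=h_i^k\cdot h_j^l$, uses a sign-character argument under the symmetric group $\mathfrak{S}_d$ (acting on upper and lower indices) to force the answer to be the alternating sum of products of $R_{ij}^{kl}$ up to a single undetermined constant $c(m,d)$, and finally pins down $c(m,d)$ by evaluating both sides on the special configuration $h_i^{jp}=\delta_i^j$ for $p=1$, $h_i^{jp}=0$ for $p\geq 2$, where $A_{\mathcal{D}}$ becomes a Beta-function integral of $t_1^d$ over the sphere. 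Your route replaces all of this by the generalized Kronecker delta expansion of the determinant plus the Wick-type spherical moment formula with its explicit normalization $1/(m(m+2)\cdots(m+2e-2))$; the constant then emerges automatically rather than from a special case. What the paper's argument buys is the complete avoidance of combinatorial bookkeeping, at the price of citing the FFT as a black box (and, strictly speaking, your assertion that the averaged tensor must be a sum of $\delta$-products is the same FFT in tensor form, so you have not entirely escaped it). What your argument buys is an elementary, self-contained computation in which every factor is visible. The bookkeeping you flag as delicate does close up: the free-index sum over $i_1,\dots,i_d,j_1,\dots,j_d$ collapses onto the paper's sum over $d$-subsets $\mathcal{D}$ and couplings with multiplicity $(d/2)!\,4^{d/2}$, and
\[
\frac{1}{d!}\cdot(d-1)!!\cdot 2^{-d/2}\cdot(d/2)!\,4^{d/2}=\frac{(d-1)!!\,(d/2)!\,2^{d/2}}{d!}=1,
\]
so the prefactors do telescope exactly as you predict; you should include this verification explicitly when writing the proof out in full.
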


\begin{proof}
Averaging the characteristic polynomial $\det(\lambda\delta_i^j-\sum t_ph_{i}^{jp})$ over the
orthogonal group $\mathrm{O}_m(\mathbb{R})$ acting on $t\in\mathbb{R}^m$ yields
\[ 
\langle\det(\lambda\delta_i^j-\sum t_ph_{i}^{jp})\rangle_{\mathrm{O}_m(\mathbb{R})}=
\sum_{d=0}^n\,{\lambda}^{n-d}\,|t|^d\sum_{\mathcal{D}}\,A_{\mathcal{D}}(h_i^j)
\]
with the sum over all even $d$ and all cardinality $d$ subsets $\mathcal{D}$ of 
$\{1,\ldots,n\}$ and with $A_{\mathcal{D}}(h_i^j)$ given by
\[ 
\langle\det(t\cdot h_i^j)_{i,j\in\mathcal{D}}\rangle_{\mathrm{O}_m(\mathbb{R})}=
|t|^dA_{\mathcal{D}}(h_i^j)
\]
as degree $d$ polynomial on $\mathbb{R}^{m\times d^2}$, which
is invariant under the diagonal action of $\mathrm{O}_m(\mathbb{R})$. By the first
fundamental theorem of invariant theory for $\mathrm{O}_m(\mathbb{R})$ (see 
Corollary 4.2.3 of \cite{Goodman--Wallach 1998}, which is a modern
reincarnation of Weyl's classic \cite{Weyl 1939b}) we have
\[  
A_{\mathcal{D}}(h_i^j)=B_{\mathcal{D}}(H_{ij}^{kl})
\]
with $H_{ij}^{kl}=h_i^k\cdot h_j^l$ and $i,j,k,l\in\mathcal{D}$ with $i\neq j,k\neq l$. 
From the explicit determinantal form it follows that $B_{\mathcal{D}}(H_{ij}^{kl})$ is 
in fact a linear combination of monomials of the form
\[ H_{i_1i_2}^{j_1j_2}\cdots H_{i_{d-1}i_d}^{j_{d-1}j_d} \]
with $\mathcal{D}=\{i_1,\dots,i_d\}=\{j_1,\dots,j_d\}$. Moreover under the action of the
symmetric group $\mathfrak{S}_d$ acting on both the lower and the upper indices
$B_{\mathcal{D}}(H_{ij}^{kl})$ transforms under the sign character. 
Therefore $B_{\mathcal{D}}(H_{ij}^{kl})=C_{\mathcal{D}}(R_{ij}^{kl})$ with 
$R_{ij}^{kl}=H_{ij}^{kl}-H_{ji}^{kl}$ and by symmetry for $\mathfrak{S}_d$ 
we arrive at
\[
C_{\mathcal{D}}(R_{ij}^{kl})=c(m,d)\sum \varepsilon_{i_1\ldots i_d}^{j_1\ldots j_d}\;
   R_{i_1i_2}^{j_1j_2}\cdots R_{i_{d-1}i_d}^{j_{d-1}j_d} 
\]
with the sum over all possible couplings of pairs from $\mathcal{D}$ and $c(m,d)$
a constant depending solely on $m$ and $d$. The conclusion is that
\[
\langle\det(\delta_i^j-\sum t_ph_{i}^{jp})\rangle_{\mathrm{O}_m(\mathbb{R})}=
\sum_{d=0}^n\,c(m,d)H_d\,|t|^d
\]
and all that is left is the computation of the constant $c(m,d)$.

For this computation we take the special choice $h_i^{jp}=\delta_i^j$ for $p=1$ and
$h_i^{jp}=0$ for $p\geq 2$. In that case 
\[
A_{\mathcal{D}}(h_i^j)=\frac{\int_{S^{m-1}}\,t_1^d\,d\mu(t)}{\int_{S^{m-1}}\,d\mu(t)}
\]
with $\mu$ the Euclidean measure on the unit sphere $S^{m-1}$ in $\mathbb{R}^m$. 
The integral in the numerator becomes
\[ 
\int_{-1}^1\,r^d(1-r^2)^{(m-3)/2}dr=\int_0^1\,s^{(d-1)/2}(1-s)^{(m-3)/2}ds
\]
(apart from a factor volume $\omega_{m-1}$ of $S^{m-2}$) and so equals
\[
\mathrm{B}((d+1)/2,(m-1)/2)=\frac{\Gamma((d+1)/2)\Gamma((m-1)/2)}{\Gamma((d+m)/2)}\,.
\]
In turn this implies
\[
A_{\mathcal{D}}(h_i^j)=\frac{\Gamma((d+1)/2)\Gamma(m/2)}{\Gamma(1/2)\Gamma((d+m)/2)}=
\frac{1\cdot3\cdots(d-1)}{m(m+2)\cdots(m+d-2)}\,.
\]
On the other hand $R_{ij}^{kl}=\delta_i^k\delta_j^l-\delta_j^k\delta_i^l$ and so equal to 
$\varepsilon_{ij}^{kl}$ if the pairs $ij$ and $kl$ coincide and $0$ otherwise, and hence
\[
C_{\mathcal{D}}(R_{ij}^{kl})=c(m,d)\,\frac{d!}{2^{d/2}(d/2)!}=c(m,d)\cdot1\cdot3\cdots(d-1)\,.
\]
Hence $c(m,d)=1/m(m+2)\cdots(m+d-2)$ as desired. 
\end{proof}

Recall that the volume $\omega_m$ of the unit sphere $S^{m-1}$
and the volume $\Omega_m$ of the unit ball $\mathbb{B}^m$ are related
by $\Omega_m=\omega_m/m$. The tube formula of Weyl can now be easily
derived.

\begin{corollary}\label{Weyl tube formula}
 If the domain $\mathbb{D}^m$ in $\mathbb{R}^m$ is equal to the unit ball 
 $\mathbb{B}^m$ then the tube volume is given by
\[ V_M(a)=\Omega_m\sum_{d=0}^n\frac{k_d(M)\,a^{m+d}}{(m+2)\cdots(m+d)}
   \qquad (d\;\mathrm{even}) \]
for $a>0$ small and $k_d(M)=\int_M H_d\,ds$ with $H_d$ the intrinsic
expression on $M$ in the previous theorem and $ds$ the Riemannian measure 
on $M$.
\end{corollary}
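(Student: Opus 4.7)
The plan is to start from the formula
\[ V_{U^n}(a)=\int_{U^n}\left\{\int_{a\mathbb{B}^m}\det(\delta_i^j-\sum t_ph_i^{jp})\,dt\right\}\sqrt{\det g_{ij}}\,du \]
derived in Section~\ref{The volume of tubes}, and exploit the rotational symmetry of $\mathbb{B}^m$ to replace the polynomial integrand by its $\mathrm{O}_m(\mathbb{R})$-average. Since $a\mathbb{B}^m$ is invariant under every $g\in\mathrm{O}_m(\mathbb{R})$ and Lebesgue measure is $\mathrm{O}_m(\mathbb{R})$-invariant, Fubini applied to the compact group $G_m=\mathrm{O}_m(\mathbb{R})$ with normalised Haar measure gives
\[ \int_{a\mathbb{B}^m}\det(\delta_i^j-\sum t_ph_i^{jp})\,dt
 =\int_{a\mathbb{B}^m}\langle\det(\delta_i^j-\sum t_ph_i^{jp})\rangle_{\mathrm{O}_m(\mathbb{R})}\,dt, \]
which by Theorem~\ref{integrand average theorem} equals
\[ \sum_{d=0}^n\frac{H_d}{m(m+2)\cdots(m+d-2)}\int_{a\mathbb{B}^m}|t|^d\,dt. \]

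Next I would compute $\int_{a\mathbb{B}^m}|t|^d\,dt$ in polar coordinates. The integral factors as $\omega_m\int_0^a r^{d}\cdot r^{m-1}\,dr=\omega_m a^{m+d}/(m+d)$. Substituting $\omega_m=m\,\Omega_m$ gives the pointwise identity
\[ \int_{a\mathbb{B}^m}\det(\delta_i^j-\sum t_ph_i^{jp})\,dt
 =\Omega_m\sum_{d=0}^n\frac{H_d\,a^{m+d}}{(m+2)\cdots(m+d)}, \]
since the factor $m\cdot(m+d)$ combines with $m(m+2)\cdots(m+d-2)$ in the denominator to produce $(m+2)\cdots(m+d)$ after cancelling the initial $m$. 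Integrating this identity against $\sqrt{\det g_{ij}}\,du$ over $U^n$, and then patching together the local contributions over a finite atlas of tubular neighbourhoods, yields the global formula with $k_d(M)=\int_M H_d\,ds$ as claimed. The vanishing of odd-degree terms is automatic from Theorem~\ref{integrand average theorem}.

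Since all the analytic content -- the invariant theory of $\mathrm{O}_m(\mathbb{R})$, the identification of the intrinsic integrands $H_d$ via the Gauss equations, and the determination of the constants $c(m,d)$ -- has already been packaged inside Theorem~\ref{integrand average theorem}, no genuine obstacle remains. The only points demanding care are bookkeeping of constants (not losing the factor $\Omega_m=\omega_m/m$ when passing between sphere and ball volumes, and reading the product $(m+2)\cdots(m+d)$ as the empty product $1$ when $d=0$), and the justification for commuting averaging with integration, which is immediate by compactness of $\mathrm{O}_m(\mathbb{R})$ and boundedness of the polynomial integrand on $a\mathbb{B}^m$.
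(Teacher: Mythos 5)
Your proposal is correct and follows essentially the same route as the paper: replace the integrand by its $\mathrm{O}_m(\mathbb{R})$-average (justified by the rotational invariance of $a\mathbb{B}^m$ and of Lebesgue measure), invoke Theorem~\ref{integrand average theorem}, evaluate $\int_{a\mathbb{B}^m}|t|^d\,dt=\omega_m a^{m+d}/(m+d)$ in polar coordinates, and use $\omega_m=m\,\Omega_m$ to assemble the constants. The bookkeeping of the denominator $(m+2)\cdots(m+d)$ and the $d=0$ empty-product convention is handled correctly, so nothing is missing.
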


\begin{proof}
By Section~\ref{The volume of tubes} and the symmetry of $\mathbb{B}^m$ we have
\begin{align*}
   V_{U^n}(a) &=\int_{U^n}\{\int_{a\mathbb{B}^m}
   \det(\delta_i^j-\sum t_ph_{i}^{jp})\,dt\}\sqrt{\det g_{ij}}\, du \\
              &=\int_{U^n}\{\int_{a\mathbb{B}^m}
   \sum_{d=0}^n \frac{H_d\,|t|^d}{m(m+2)\cdots(m+d-2)}\,dt\}\sqrt{\det g_{ij}} \, du \\
              &=\int_{U^n}\{\omega_m \int_0^a
   \sum_{d=0}^n\frac{H_d\,r^{m+d-1}}{m(m+2)\cdots(m+d-2)}\,dr\}\sqrt{\det g_{ij}} \, du \\
              &=\Omega_m\sum_{d=0}^n\frac{\{\int_{U^n} H_d\,ds\}\,a^{m+d}}{(m+2)\cdots(m+d-2)(m+d)} 
\end{align*}
and the result follows.
\end{proof}

If we consider domains $\mathbb{D}^m$ with symmetry groups $G_m < 
\mathrm{O}_m(\mathbb{R})$ such that the invariant polynomials of degree 
$\leq n = \dim M$ for both groups agree, then we can prove 
Theorem~\ref{abstract tube formula}.

\begin{proof}[Proof of Theorem~\ref{abstract tube formula}]
By the Fubini theorem we have for $H<G$ compact groups and $f$ a continuous
function on $G$ that 
\[ \int_G\,f(g)\,d\mu_G(g)=\int_{G/H}\{\int_H\,f(gh)\,d\mu_H(h)\}\,d\mu_{G/H}(gH) \]
with $\mu_G,\mu_H$ and $\mu_{G/H}$ the normalized invariant measures on 
$G,H$ and $G/H$ respectively. Hence by the assumption on $G_m$ we have
\[ \langle\det(\delta_i^j-\sum t_ph_{i}^{jp})\rangle_{G_m}=\langle
   \det(\delta_i^j-\sum t_ph_{i}^{jp})\rangle_{\mathrm{O}_m(\mathbb{R})} \]
and so we can just argue as in the previous proof. 
\end{proof}

Using the discussion in Section~\ref{The volume of tubes}, for $n=1$ the 
tube formula is intrinsic as long as $\mathbb{D}^m$ is centrally symmetric, 
the case already covered by Hotelling~\cite{Hotelling 1939} if
$\mathbb{D}^m=\mathbb{B}^m$.

\begin{corollary}\label{Curves}
If $M$ is a curve of finite length in $\mathbb{R}^{m+1}$ and $\mathbb{D}^m$ 
is centrally symmetric, then for $a>0$ sufficiently small
\[
 V_M(a) = \mathrm{length}(M) \, \mathrm{vol}(\mathbb{D}^m) \, a^m
\]
and hence is intrinsic. \qed
\end{corollary}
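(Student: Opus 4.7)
The plan is to specialize the local formula of Section~\ref{The volume of tubes} to $n=1$, where the integrand over the normal slice reduces to a polynomial of degree at most one in $t$, so that central symmetry alone kills every non-constant contribution. No invariant theory or group averaging is needed.

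First I would substitute $n=1$ into
\[ V_{U^n}(a)=\int_{U^n}\left\{\int_{a\mathbb{D}^m}\det(\delta_i^j-\sum t_ph_{i}^{jp})\,dt\right\}\sqrt{\det g_{ij}}\,du. \]
With only the single tangential index $i=j=1$ the determinant collapses to the scalar $1-\sum_p t_p h_1^{1p}$, which is affine in $t$ with constant term $1$ and no higher-order terms at all.

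Next I would evaluate the inner integral. By the central-symmetry hypothesis $-\mathbb{D}^m=\mathbb{D}^m$, the change of variables $t\mapsto -t$ shows that the integral of every linear function in $t$ over $a\mathbb{D}^m$ vanishes, so
\[ \int_{a\mathbb{D}^m}\bigl(1-\sum_p t_p h_1^{1p}\bigr)\,dt=\int_{a\mathbb{D}^m}dt=a^m\,\mathrm{vol}(\mathbb{D}^m). \]
Consequently the outer integral becomes $a^m\,\mathrm{vol}(\mathbb{D}^m)\int_{U^1}\sqrt{g_{11}}\,du$, whose second factor is precisely the length of the curve piece $r(U^1)$.

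Finally I would patch the local contributions together, using a partition of unity or a coordinate cover of $M$ together with the additivity of both length and tube volume; this is legitimate for $a>0$ small enough that the tube map is a global diffeomorphism. This yields the advertised identity. There is no real obstacle to overcome, only a point worth emphasizing: for curves one completely bypasses the hypothesis of Theorem~\ref{abstract tube formula}, because a subgroup of $\mathrm{O}_m(\mathbb{R})$ is trivially orthogonal of degree $1$ (the only $\mathrm{O}_m(\mathbb{R})$-invariant linear polynomial on $\mathbb{R}^m$ is zero), so the conclusion survives under the much weaker assumption of mere central symmetry of $\mathbb{D}^m$.
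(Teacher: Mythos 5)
Your argument is correct and is essentially the paper's own: the corollary carries no separate proof beyond the remark that it follows from the discussion in Section~\ref{The volume of tubes}, which for $n=1$ amounts exactly to your computation that the integrand $\det(\delta_i^j-\sum t_ph_i^{jp})=1-\sum t_ph_1^{1p}$ is affine in $t$, so central symmetry kills the linear term and only $v_0=\mathrm{vol}(\mathbb{D}^m)\,\mathrm{length}(M)$ survives. Only your closing aside is inaccurate: an arbitrary subgroup of $\mathrm{O}_m(\mathbb{R})$ is \emph{not} orthogonal of degree $1$ (the trivial subgroup is not, since $t_1$ is then an invariant linear polynomial that is not $\mathrm{O}_m(\mathbb{R})$-invariant); the relevant point is that central symmetry means $\{\pm\mathrm{id}\}$ is a symmetry group of $\mathbb{D}^m$, and that group \emph{is} orthogonal of degree $1$, so the corollary is an instance of Theorem~\ref{abstract tube formula} rather than a bypass of it.
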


The following example shows that central symmetry is not a necessary
condition.

\begin{example}
Let $\mathbb{D}^m$ be the union of half the unit ball $\{|t|\leq1,t_1\leq0\}$ 
and the cone $\{t_2^2+\dots+t_{m}^2\leq (1-t_1/b)^2,0\leq t_1\leq b\}$ with 
top $(b,0,\ldots,0)$ for some $b>0$. 
By symmetry the average of any linear function of $t_2,\ldots,t_m$ over 
$\mathbb{D}^m$ equals zero. By direct computation the average of $t_1$
over $\mathbb{D}^m$ is equal to zero if $b=\sqrt{m}$, and so the previous
corollary remains valid for this domain as well.
\end{example}

Indeed for curves it is sufficient that the center of mass of $\mathbb{D}^m$ is
at the origin by the generalized Pappus centroid theorem. See Section~\ref{Pappus type theorems}
for the higher dimensional case.

\section{Examples of polyhedral domains $\mathbb{D}^m$}
\label{Polyhedral examples}

If we are looking for domains $\mathbb{D}^m$ in $\mathbb{R}^m$ 
with a sufficiently large symmetry group $G_m<\mathrm{O}_m(\mathbb{R})$
it is natural to consider regular polytopes $\mathbb{D}^m$ in $\mathbb{R}^m$. 
It is well known that the symmetry group $G_m$ in that case is an irreducible finite 
reflection group. Such groups are classified by their Coxeter diagrams
or by letters $\mathrm{X}_m$ with $\mathrm{X}=\mathrm{A},\mathrm{B},
\mathrm{D},\mathrm{E},\mathrm{F},\mathrm{H},\mathrm{I}(k)$ for $k\geq5$.
The corresponding reflection groups are denoted by $G_m=W(\mathrm{X}_m)$.

It is a well known theorem due to Shephard and Todd 
\cite{Shephard--Todd 1954} (with a case by case proof) and Chevalley 
\cite{Chevalley 1955} (with a proof from the Book) that
the algebra of polynomial invariants for a finite reflection group 
$W<\mathrm{O}(\mathbb{R}^m)$ is itself a polynomial algebra.

\begin{theorem}\label{Chevalley theorem}
The algebra $\mathbb{R}[\mathbb{R}^m]^{W}$ of polynomial invariants
for $W$ is of the form $\mathbb{R}[p_1,\ldots,p_m]$ with $p_1,\ldots,p_m$ 
algebraically independent homogeneous invariants of degrees $d_1\,d_2,\ldots,d_m$
respectively. 
\end{theorem}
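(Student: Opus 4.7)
The plan is to follow Chevalley's argument, which hinges on the Reynolds operator and the Hilbert basis theorem. Set $S=\mathbb{R}[\mathbb{R}^m]=\mathbb{R}[x_1,\ldots,x_m]$ for the full polynomial ring. The \emph{Reynolds operator} $\rho \colon S \to S^W$, defined by $\rho(f)=|W|^{-1}\sum_{w\in W}wf$, is an $S^W$-linear idempotent projection onto $S^W$. The \emph{Hilbert ideal} $I\subset S$ generated by the positive-degree $W$-invariants is finitely generated by Hilbert's basis theorem, and homogeneity lets me choose a minimal system $p_1,\ldots,p_k$ of homogeneous generators, of positive degrees $d_1,\ldots,d_k$.

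I would first show that these very ideal generators already generate $S^W$ as an algebra. Induct on degree: a homogeneous invariant $f$ of positive degree lies in $I$, hence $f=\sum a_i p_i$ with $a_i\in S$ homogeneous of smaller degree; applying $\rho$ and using the $W$-invariance of each $p_i$ gives $f=\sum\rho(a_i)p_i$, where each $\rho(a_i)\in S^W$ has strictly smaller degree than $f$. Iterating writes every invariant as a polynomial in $p_1,\ldots,p_k$, so $S^W=\mathbb{R}[p_1,\ldots,p_k]$.

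To pin down the count, observe that the extension $S^W\subset S$ is module-finite, because each coordinate $x_j$ is a root of the monic polynomial $\prod_{w\in W}(T-wx_j)\in S^W[T]$. The Krull dimensions therefore agree, $\dim S^W=\dim S=m$, so $k\geq m$. Equality $k=m$ and algebraic independence are established simultaneously in the next step.

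The main obstacle is the algebraic independence of $p_1,\ldots,p_k$. Assume for contradiction a nontrivial weighted-homogeneous polynomial relation $F(p_1,\ldots,p_k)=0$ of minimal weighted total degree. Setting $F_i=(\partial F/\partial y_i)(p_1,\ldots,p_k)\in S^W$, minimality forbids all $F_i$ from vanishing, and the chain rule yields $\sum_i F_i(\partial p_i/\partial x_j)=0$ in $S$ for every $j=1,\ldots,m$. After relabeling so that $F_1,\ldots,F_\ell$ are a minimal set of generators of the $S$-ideal $(F_1,\ldots,F_k)$, each remaining $F_{\ell+r}$ can be written as $F_{\ell+r}=\sum_{i\leq \ell}h_{ir}F_i$ with $h_{ir}\in S$ homogeneous. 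Substituting back into the chain-rule identities produces, for each $j$, a nontrivial syzygy among $F_1,\ldots,F_\ell$; combining this with the minimality of $\{F_1,\ldots,F_\ell\}$, applying $\rho$ to pass back to $S^W$, and unwinding via the Euler identity $\sum_j x_j\,\partial_j p_i=d_i p_i$ forces $p_1\in I^2$. This contradicts the minimality of $\{p_1,\ldots,p_k\}$ as a generating set of $I$ modulo $I^2$, proving both algebraic independence and $k=m$. The intricate linear-algebra bookkeeping with two nested layers of minimality in this final differentiation-plus-Reynolds step is where the real work lies.
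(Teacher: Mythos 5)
The paper itself offers no proof of this statement: it is quoted as the classical Chevalley--Shephard--Todd theorem, with the proof delegated to Chevalley's paper and the textbooks of Bourbaki and Humphreys. So your proposal has to stand on its own, and while it correctly reproduces the skeleton of Chevalley's argument (Reynolds operator to show the Hilbert-ideal generators generate $S^W$ as an algebra; module-finiteness of $S^W\subset S$ to get $\dim S^W=m$ and hence $k\geq m$; differentiation of a minimal weighted-homogeneous relation), it has a genuine gap at exactly the step you compress into ``combining this with the minimality of $\{F_1,\ldots,F_\ell\}$, applying $\rho$ \ldots forces $p_1\in I^2$.'' Nowhere in your argument is the hypothesis that $W$ is generated by reflections used, and it must be used somewhere: the conclusion is false for general finite subgroups of $\mathrm{O}_m(\mathbb{R})$. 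For instance $W=\{\pm\mathrm{id}\}$ acting on $\mathbb{R}^2$ has invariant ring $\mathbb{R}[x^2,xy,y^2]$, which is not a polynomial algebra, yet every step you have actually written out (Reynolds projection, minimal generators of the Hilbert ideal, chain-rule syzygies, reduction to a minimal generating subset of $(F_1,\ldots,F_k)$) goes through verbatim for this group.

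The missing ingredient is Chevalley's key lemma: if $q_1,\ldots,q_\ell\in S^W$ are homogeneous with $q_1\notin(q_2,\ldots,q_\ell)S^W$, and $h_1,\ldots,h_\ell\in S$ are homogeneous with $\sum_i h_iq_i=0$, then $h_1$ lies in the Hilbert ideal $I$. Its proof is an induction on $\deg h_1$ in which the reflection hypothesis enters essentially: for a reflection $s$ with reflecting hyperplane $\ker L_s$, each $h_i-s(h_i)$ vanishes on that hyperplane and is therefore divisible by the linear form $L_s$; since $\sum_i\bigl(h_i-s(h_i)\bigr)q_i=0$, the inductive hypothesis applied to the quotients gives $(h_1-s(h_1))/L_s\in I$, hence $h_1\equiv s(h_1)\pmod{I}$ for every reflection $s$, hence $h_1\equiv\rho(h_1)\pmod{I}$ because the reflections generate $W$, and $\rho(h_1)\in I$ once $\deg h_1>0$. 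Applying this lemma to the coefficient of $F_1$ in your reduced syzygy shows that $\partial p_1/\partial x_j+\sum_r h_{1r}\,\partial p_{\ell+r}/\partial x_j\in I$ for each $j$; multiplying by $x_j$, summing over $j$ and using Euler's identity then gives $d_1p_1\in(p_2,\ldots,p_k)S+S_{+}I$, and comparing degree-$d_1$ components yields $p_1\in(p_2,\ldots,p_k)S$, contradicting minimality of the generating set. (That final contradiction, not ``$p_1\in I^2$'', is the correct form, but this is cosmetic; the substantive omission is the reflection lemma, without which the proof cannot close.)
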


For each of the irreducible types these degrees can be calculated and 
are given in the next table. The proof of these results can be found 
in the standard text books by Bourbaki \cite{Bourbaki 1968} or by 
Humphreys \cite{Humphreys 1990}.

\bigskip
\begin{center}
\begin{tabular}{|l|l|l|} \hline
$\mathrm{type}$   & $m$      & $d_1,d_2,\ldots,d_m$ \\ \hline
$\mathrm{A}_m$    & $\geq 1$ & $2,3,\ldots,m+1$ \\ 
$\mathrm{B}_m$    & $\geq 2$ & $2,4,\ldots,2m$ \\ 
$\mathrm{D}_m$    & $\geq 4$ & $2,4,\ldots,2m-2,m$ \\
$\mathrm{E}_6$    & $6$      & $2,5,6,8,9,12$ \\ 
$\mathrm{E}_7$    & $7$      & $2,6,8,10,12,14,18$ \\
$\mathrm{E}_8$    & $8$      & $2,8,12,14,18,20,24,30$ \\ 
$\mathrm{F}_4$    & $4$      & $2,6,8,12$ \\ 
$\mathrm{H}_3$    & $3$      & $2,6,10$ \\
$\mathrm{H}_4$    & $4$      & $2,12,20,30$ \\ 
$\mathrm{I}_2(k)$ & $2$      & $2,k \geq 5$ \\ \hline
\end{tabular}
\end{center}

\bigskip
So for $m\geq 2$ the irreducible finite reflection group
$W(\mathrm{X}_m)<\mathrm{O}(\mathbb{R}^m)$ is orthogonal of degree $d_2-1$
in the sense of Definition{\;\ref{orthogonal of degree n definition}}.

\begin{corollary}\label{polyhedral tube formula}
If $\mathbb{D}^m$ is a domain in $\mathbb{R}^m$ invariant under a finite 
reflection group $W(\mathrm{X}_m)$ then the tube formula of 
Theorem~\ref{abstract tube formula} does hold with intrinsic coefficients 
if $n=\dim M < d_2$, that is, the dimension $n$ of $M$ is strictly smaller 
than the second fundamental degree $d_2$. \qed
\end{corollary}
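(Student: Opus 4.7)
The plan is to combine Theorem~\ref{abstract tube formula} with the Chevalley--Shephard--Todd Theorem~\ref{Chevalley theorem}. The corollary will follow from the abstract tube formula as soon as I substantiate the assertion made in the paragraph preceding the statement, namely that $W(\mathrm{X}_m)$ is orthogonal of degree $d_2 - 1$ in the sense of Definition~\ref{orthogonal of degree n definition}. Once that is in place, any $n < d_2$ satisfies $n \leq d_2 - 1$, so $W(\mathrm{X}_m)$ is orthogonal of degree $n$, and Theorem~\ref{abstract tube formula} applies verbatim to deliver the intrinsic tube formula.

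To establish this degree claim, I first invoke Theorem~\ref{Chevalley theorem} and write $\mathbb{R}[\mathbb{R}^m]^W = \mathbb{R}[p_1,\ldots,p_m]$ with $p_k$ homogeneous of degree $d_k$; then I read off from the table that $d_1 = 2$ and $d_2 \geq 3$ for every irreducible type $\mathrm{X}_m$. Since $W \subset \mathrm{O}_m(\mathbb{R})$, the function $|t|^2$ is a nonzero $W$-invariant polynomial of degree $2$; because the space of $W$-invariants of degree $2$ is one-dimensional (spanned by $p_1$, no other $p_k$ having degree $2$), I may rescale so that $p_1 = |t|^2$. If $q$ is now any $W$-invariant polynomial of degree at most $d_2 - 1$, then the monomial expansion of $q$ in $p_1,\ldots,p_m$ cannot involve any factor $p_k$ with $k \geq 2$, since such a factor would already contribute degree $\geq d_2$. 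Hence $q \in \mathbb{R}[p_1] = \mathbb{R}[|t|^2] \subset \mathbb{R}[\mathbb{R}^m]^{\mathrm{O}_m(\mathbb{R})}$, which is precisely the required property.

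The hard part is negligible: the argument is a direct book-keeping exercise with the Chevalley decomposition and the table of fundamental degrees, and the only point requiring a brief justification is the identification $p_1 = |t|^2$ up to a scalar. This in turn relies on the observation that in every irreducible type the fundamental degrees satisfy $d_1 = 2 < d_2$, so that the space of $W$-invariants of degree $2$ is one-dimensional, forcing $p_1$ to be proportional to the (manifestly $W$-invariant) Euclidean norm squared. Irreducibility of $W(\mathrm{X}_m)$ on $\mathbb{R}^m$ is used implicitly here; for a reducible reflection group the degree-$2$ invariant subspace would be larger and this identification could fail.
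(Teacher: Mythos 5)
Your proposal is correct and follows exactly the route the paper intends: the corollary is deduced from Theorem~\ref{abstract tube formula} once one knows that $W(\mathrm{X}_m)$ is orthogonal of degree $d_2-1$, which the paper asserts directly after the table and which you justify via Theorem~\ref{Chevalley theorem}, the identification $p_1=|t|^2$ (up to scalar), and the degree count. The only difference is that you spell out the book-keeping the paper leaves implicit, including the correct remark that irreducibility is what forces the degree-$2$ invariants to be one-dimensional.
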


For example, if $\mathbb{D}^3$ is an icosahedron with symmetry group
$W(\mathrm{H}_3)$ then the tube formula is intrinsic for submanifolds $M$ 
of dimension $n\leq5$ in $\mathbb{R}^{n+3}$, and if $\mathbb{D}^4$ is a 
$600$-cell with symmetry group $W(\mathrm{H}_4)$ then the tube formula is 
intrinsic for $n\leq11$. 
For any dimension $n$ of $M\hookrightarrow\mathbb{R}^{n+2}$ with 
$\mathbb{D}^2$ a regular $k$-gon with $k>n$ the tube formula is intrinsic, 
since its symmetry group is $W(\mathrm{I}_2(k))$.
For dimension $n=2$ or $3$ we find in this way examples of intrinsic
tube formulas for arbitrary codimension $m$ via domains $\mathbb{D}^m$ with 
symmetry groups $W(\mathrm{A}_m)$ ($n=2$) and $W(\mathrm{B}_m)$ ($n=2,3$), 
respectively. However, for dimension $n\geq 4$ we obtain in this way 
only examples of intrinsic tube formulas with relatively small codimension 
$m\leq8$.

\medskip
Examples with larger codimension $m$ can be obtained by the following construction.

\begin{corollary}
Let $G$ be a noncompact simple Lie group acting on its Lie algebra $\mathfrak{g}$, and let 
$\theta$ be a Cartan involution of $G$ and $\mathfrak{g}$ and $\mathfrak{g} = \mathfrak{k} 
\oplus \mathfrak{p}$ the decomposition in $+1$ and $-1$ eigenspaces of $\theta$ on 
$\mathfrak{g}$. If the domain $\mathbb{D}\subset\mathfrak{p}$ is the convex hull of a 
nonzero orbit of $K =G^\theta$ on $\mathfrak{p}$ then the tube formula of 
Theorem~\ref{abstract tube formula} does hold with intrinsic coefficients under the 
assumption that the dimension $n$ of $M\hookrightarrow\mathbb{R}^{n+m}$ (with
$m=\dim\mathfrak{p}$) is strictly smaller than the second fundamental 
degree $d_2$ of the Weyl group $W$ of the pair $(\mathfrak{g},\theta)$.
\end{corollary}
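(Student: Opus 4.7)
The plan is to identify $K=G^\theta$ with an orthogonal subgroup of the ambient $\mathrm{O}_m(\mathbb{R})$, show that it is orthogonal of degree $n$ in the sense of Definition~\ref{orthogonal of degree n definition}, and then apply Theorem~\ref{abstract tube formula}. For the Euclidean structure on $\mathfrak{p}\cong\mathbb{R}^m$ I would use the Killing form $B$ of $\mathfrak{g}$, which is positive definite on $\mathfrak{p}$ because $\mathfrak{g}$ is noncompact simple. Since $K$ preserves $B$ and the decomposition $\mathfrak{g}=\mathfrak{k}\oplus\mathfrak{p}$, its adjoint action sits inside $\mathrm{O}(\mathfrak{p},B|_{\mathfrak{p}})$. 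The convex hull $\mathbb{D}$ of a $K$-orbit is tautologically $K$-invariant, so $K$ is a subgroup of the symmetry group of $\mathbb{D}$.

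Next I would verify the degree condition via the Chevalley restriction theorem for symmetric pairs: restriction to a maximal abelian subspace $\mathfrak{a}\subset\mathfrak{p}$ induces a graded isomorphism
\[ \mathbb{R}[\mathfrak{p}]^K\;\cong\;\mathbb{R}[\mathfrak{a}]^W, \]
where $W$ is the little Weyl group of the pair $(\mathfrak{g},\theta)$. Since $G$ is simple, the restricted root system is irreducible, so $W$ is an irreducible finite reflection group on $\mathfrak{a}$, and Theorem~\ref{Chevalley theorem} presents $\mathbb{R}[\mathfrak{a}]^W$ as a polynomial algebra with fundamental degrees $d_1=2\leq d_2\leq\cdots$. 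Pulling these generators back through the restriction isomorphism yields homogeneous generators of $\mathbb{R}[\mathfrak{p}]^K$ of the same degrees. Because $\mathfrak{g}$ is simple, $\mathfrak{p}$ is irreducible as a $K$-module, so Schur's lemma forces the space of $K$-invariant quadratic forms on $\mathfrak{p}$ to be one-dimensional; the degree-two generator is therefore a scalar multiple of $B|_{\mathfrak{p}}$, that is of the squared norm $|t|^2$. Consequently, whenever $n<d_2$, every $K$-invariant polynomial of degree $\leq n$ is already a polynomial in $|t|^2$ and hence invariant under the full orthogonal group $\mathrm{O}_m(\mathbb{R})$.

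This is precisely the statement that $K$ is orthogonal of degree $n$, and Theorem~\ref{abstract tube formula} then gives the intrinsic tube formula as claimed. The main obstacle, in the sense of the only nontrivial input not already on the page, is the Chevalley restriction theorem together with the pinning down of the degree-two invariant as the Killing form; everything else is bookkeeping driven by the noncompact simple hypothesis.
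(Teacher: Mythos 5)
Your argument is correct and follows essentially the same route as the paper: embed $K=G^\theta$ into $\mathrm{O}(\mathfrak{p})$ via the Killing form, invoke the Chevalley restriction theorem to identify $\mathbb{R}[\mathfrak{p}]^K$ with $\mathbb{R}[\mathfrak{a}]^W$, and use the fundamental degrees of the irreducible reflection group $W$ to conclude that $K$ is orthogonal of degree $n$ whenever $n<d_2$, after which Theorem~\ref{abstract tube formula} applies. The only ingredient the paper includes that you omit is Kostant's convexity theorem, used to see that the convex hull of a nonzero $K$-orbit meets $\mathfrak{a}$ in the convex hull of a $W$-orbit and hence is genuinely a compact domain around the origin; this is a minor verification (irreducibility of $\mathfrak{p}$ as a $K$-module also yields it directly), not a gap in the main line of reasoning.
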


\begin{proof}
The Killing form $(\cdot,\cdot)$ on $\mathfrak{p}$
is positive definite and the fixed point group $K=G^{\theta}$ of
$\theta$ on $G$ acts on $\mathfrak{p}$ as a subgroup of 
$\mathrm{SO}(\mathfrak{p})$. If $\mathfrak{a}\subset\mathfrak{p}$ is
a maximal Abelian subspace then each orbit of $K$ on $\mathfrak{p}$
intersects $\mathfrak{a}$ in an orbit of the Weyl group 
$W=\mathrm{N}_K(\mathfrak{a})/\mathrm{Z}_K(\mathfrak{a})$ of the
pair $(\mathfrak{g},\theta)$. Hence each invariant polynomial
$p\in\mathbb{R}[\mathfrak{p}]^K$ for $K$ on $\mathfrak{p}$ restricts
to a Weyl group invariant polynomial on $\mathfrak{a}$. It is a theorem
of Chevalley (see Lemma $7$ in \cite{Harish-Chandra 1958}) that the
restriction map 
\[ \mathbb{R}[\mathfrak{p}]^K\rightarrow
   \mathbb{R}[\mathfrak{a}]^W \]
is an isomorphism of algebras. Since $W$ acts on $\mathfrak{a}$ as a 
finite reflection group the latter algebra is described by Theorem
{\ref{Chevalley theorem}}. The possible finite reflection groups that
can occur as such a Weyl group $W$ are those reflection groups, 
which can be defined over $\mathbb{Z}$. This means that $\mathrm{H}_3$ 
and $\mathrm{H}_4$ are excluded and only the dihedral types 
$\mathrm{I}_2(k)=\mathrm{A}_2,\mathrm{B}_2,\mathrm{G}_2$ for 
$k=3,4,6$ respectively are allowed. 
The text books \cite{Helgason 1978} and \cite{Helgason 1984} by Helgason
give a thorough exposition of the theory. Using the convexity theorem of 
Kostant \cite{Kostant 1973} it is easy to see that the convex hull
of an orbit of $K$ on $\mathfrak{p}$ intersects $\mathfrak{a}$
in the convex hull of an orbit of $W$ on $\mathfrak{a}$.
\end{proof}

For example, if $G$ is the complex Lie group of type $\mathrm{E}_8$ 
(and so $K$ is the compact Lie group of type $\mathrm{E}_8$ acting on
$\mathfrak{p}=i\mathfrak{k}$) then
we do find in this way examples of local submanifolds $M$ of Euclidean 
space of dimension $n \leq 7$ and of codimension $m=248$ for which
the tube formula of Theorem~\ref{abstract tube formula} has 
intrinsic coefficients. Presumably this large codimension relative to 
the small dimension of $M$ allows for an abundance of room for isometric 
deformations for the embedding of $M$ in such a Euclidean space.

\section{No-go results for diamond domains $\Dd^m$}
\label{No go results for diamant domains}

In this section we shall denote by $\Dd^m$ the convex hull of the subset
\[ \{t_1^2+\ldots+t_{m-1}^2\leq1,t_m=0\}\sqcup\{(0,\ldots,0,\pm1)\} \]
in $\mathbb{R}^m$, $m\geq 2$. 
Any multiple $a\Dd^m$ for $a>0$ will be called a diamond domain. 
The symmetry group $G_m$ of $\Dd^m$ is equal
to $\mathrm{O}_{m-1}(\mathbb{R})\times\mathrm{O}_1(\mathbb{R})$ for 
$m\geq 3$ while for $m=2$ the symmetry group $G_2$ is equal to the 
dihedral group $W(\mathrm{B}_2)$ of order $8$.
The essential point of Weyl's argument for the intrinsic nature of the 
volume formula for tubes is the computation of the integral
\[ \int_{a\Dd^m}\det(\delta_i^j-\sum t_ph_{i}^{jp})\,dt \]
as a polynomial in $a$, by first averaging over the symmetry group 
$G_m$ of the domain $\Dd^m$ in $\mathbb{R}^m$. The outcome should
hopefully be a polynomial expression in Riemann curvature components 
$R_{ij}^{kl}$ as in Theorem~\ref{integrand average theorem}.
We will work out two examples, one with $n=2$ and $m\geq3$ and the other
with $n=4$ and $m=2$, where this does not work.

\begin{example}\label{example n=2, m>2}
Let us first consider the case of surfaces of codimension $m$ at least $3$. 
Since the symmetry group of $\Dd^m$ is then $G_m = \mathrm{O}_{m-1}(\mathbb{R})
\times \mathrm{O}_1(\mathbb{R})$ the invariants of degree $2$ in $\mathbb{R}
[t_1,\ldots,t_m]$ are linear combinations of $R=(t_1^2+\ldots+t_{m-1}^2)/(m-1)$ 
and $S=t_m^2$. The above determinant for $n=2$ becomes
\[ \det(\delta_i^j-\sum t_ph_{i}^{jp}) = 
   1-\sum t_p(h_{1}^{1p}+h_{2}^{2p})+\sum t_pt_q
   (h_{1}^{1p}h_{2}^{2q}-h_{1}^{2p}h_{2}^{1q}) \]
and averaging over the symmetry group $G_m$ yields
\[ 1+A(h_{i}^j)R(t)+B(h_{i}^j)S(t) \]
with
\[
A = \sum_{p=1}^{m-1}(h_{1}^{1p}h_{2}^{2p}-h_{1}^{2p}h_{2}^{1p}) \qquad
B = (h_{1}^{1m}h_{2}^{2m}-h_{1}^{2m}h_{2}^{1m})
\]
and $A+B= R_{12}^{12}$ intrinsic. Thus by the above, if the integrals of $R$ and 
$S$ over $\widehat{\mathbb{D}}^m$ agree, then the generalized tube volume is intrinsic as well.
The integrals of $R(t)$ and $S(t)$ over $\Dd^m$ amount respectively to (put 
$r=\sqrt{R}$ and $s=\sqrt{S}$)
\[ \frac{2\,\omega_{m-1}}{m-1}\int r^2r^{m-2}\,dr\,ds\quad
   \mathrm{and}\quad 2\,\omega_{m-1}\int s^2 r^{m-2}\,dr\,ds, \]
integrated over the triangle $\{(r,s)\,;r,s\geq0,r+s\leq1\}$,  
and we will show that for $m\geq3$ these are distinct. Apart from
the factor $2\,\omega_{m-1}$ the left integral becomes
\[ \frac{1}{m-1}\int_0^1(1-r)r^m\,dr=\frac{1}{(m-1)(m+1)(m+2)} \]
while the right integral equals
\[ \int_0^1\tfrac13(1-r)^3r^{m-2}\,dr=\frac{2}{(m-1)m(m+1)(m+2)} \]
and for the difference we find
\[ \frac{m-2}{(m-1)m(m+1)(m+2)} \]
which is nonzero for $m\geq3$, as claimed. Hence the tube volume formula for
a general surface $M$ in $\mathbb{R}^{2+m}$ with diamond domain 
$\widehat{\mathbb{D}}^m$ is no longer intrinsic for $m\geq3$. For $m=2$ it still is 
intrinsic as should, because $G_2=W(\mathrm{B}_2)$ is orthogonal of degree $3$
(in fact, it is not only intrinsic for $n=2$ but also for $n=3$ since odd exponents 
vanish for the centrally symmetric diamond $\Dd^m$).
\end{example}

\begin{example}\label{example n=4, m=2}
Let us next consider the case that $n=4$ and $m=2$. The symmetry group of the diamond
domain $\widehat{\mathbb{D}}^2=\{(t_1,t_2) \, ; |t_1|+|t_2|\leq1\}$ is the dihedral group 
$W(\mathrm{B}_2)$ of order $8$ generated by the two reflections 
$s_1(t_1,t_2)=(-t_1,t_2)$ and $s_{2}(t_1,t_2)=(t_2,t_1)$. The invariant 
polynomials for this group $W(\mathrm{B}_2)$ are generated as an algebra 
by the quadratic invariant $P(t)=t_1^2+t_2^2$ and the quartic invariant 
$Q(t)=t_1^2t_2^2$. Hence any quartic invariant is a unique linear combination 
of $Q$ and $R(t)=t_1^4+t_2^4=P^2-2Q$.

We would like to know if Weyl's averaging trick (over the dihedral group 
$W(\mathrm{B}_2)$ this time) remains valid for any pencil of second 
fundamental forms.
In order to keep the calculation as simple as possible we look at the
special case that $h_{i}^{jp}=0$ for $i\neq j$ and $p=1,2$. If we
write $h_{i}^{i1}=a_i$ and $h_{i}^{i2}=b_i$ we get
\[ \det(\delta_i^j-\sum t_p h_{i}^{jp})=\prod_{i=1}^4\;(1-t_1a_i-t_2b_i) \]
and averaging over the dihedral group $W(\mathrm{B}_2)$ yields
\[ 1+A(a,b)P(t)+B(a,b)Q(t)+C(a,b)R(t) \]
with $A,B,C$ homogeneous polynomials of degree $2,4,4$ respectively.
A direct calculation gives
\begin{align*}
A &= \tfrac12\sum_{i<j}\;(a_ia_j+b_ib_j) = \tfrac12 \sum_{i<j} R_{ij}^{ij}\\
B &= \sum_{i<j,k<l}\;a_ia_jb_kb_l = \sum_{i<j,k<l} R_{ij}^{ij} R_{kl}^{kl} - 
3 (a_1a_2a_3a_4 + b_1b_2b_3b_4)  \\
C &= \tfrac12(a_1a_2a_3a_4+b_1b_2b_3b_4)
\end{align*}
with $\{i,j,k,l\}=\{1,2,3,4\}$ in the sum for $B$ and $R_{ij}^{ij}=h_i^i\cdot h_j^j$
as before. Note that $A$ as well as $B+6C$ are intrinsic quantities. 
For the integrals of $Q$ and $R$ over $\widehat{\mathbb{D}}^2$ we find 
(put $r=|t_1|$ and $s=|t_2|$)
\[
 \int Q(t) \, dt_1 \, dt_2 = 4\int_0^1 \{\int_0^{1-r} r^2 s^2 \, ds\} \, dr = \tfrac{1}{45}
\]
and
\[
 \int R(t) \, dt_1 \, dt_2 = 4\int_0^1 \{\int_0^{1-r} (r^4 + s^4) \, ds\} \, dr = \tfrac{4}{15} 
 \neq \tfrac{6}{45}.
\]
Hence for fourfolds in $\mathbb{R}^6$ with diamond domain $\Dd^2$ we see that 
the tube volume formula need no longer be intrinsic.
\end{example}

The conclusion therefore is that the tube formula for submanifolds $M$ in
$\mathbb{R}^{n+m}$ of dimension $n$ with cross section the diamond
$\widehat{\mathbb{D}}^m$ will in general no longer be intrinsic, unless we
are in one of the cases of the following table.
\bigskip
\begin{center}
\begin{tabular}{|l|l|l|} \hline
$m=\codim M$ & symmetry group of the diamond in $\mathbb{R}^{m}$ & $n=\dim M$  \\ \hline
$1$ & $\mathrm{O}_1(\mathbb{R})$ &  any \\
$2$ & $W(\mathrm{B}_2)$ & $\leq 3$ \\ 
any & $\mathrm{O}_{m-1}(\mathbb{R}) \times \mathrm{O}_{1}(\mathbb{R})$ & $1$  \\ \hline
\end{tabular}
\end{center}
\bigskip
Our motivation for looking at diamond tubes in a Euclidean vector space came
from the analogous causal tubes in a Lorentzian vector space, which are 
discussed in the next section.

\section{Riemannian submanifolds of a Lorentzian vector space}
\label{Riemannian submanifolds of a Lorentzian vector space}

Let us suppose that $M$ is a compact connected $n$-dimensional
Riemannian submanifold of an ambient Cartesian space $\mathbb{R}^{n+m}$, 
equipped with a nondegenerate but possibly indefinite 
scalar product denoted by a dot. Let $\mathbb{D}^m$ be a compact domain
around $0$ in $\mathbb{R}^m$.
Say we have a local parametrization around $M$ given by
\[ x:U^n\times \mathbb{D}^m\rightarrow\mathbb{R}^{n+m},\quad
   (u,t)\mapsto x(u,t)=r(u)+\sum t^pn_p(u) \]
with $u=(u^1,\ldots,u^n)\in U^n$, $t=(t^1,\ldots,t^m)\in \mathbb{D}^m$ while 
$n_1(u),\ldots,n_m(u)$ are vectors in $\mathbb{R}^{n+m}$ depending smoothly on
$u\in U^n$ and
\[ \partial_ir(u)\cdot n_p(u)=0,\quad n_p(u)\cdot n_q(u)=\eta_{pq} \]   
for all $u\in U^n$, all $i=1,\ldots,n$, all $p,q=1,\ldots,m$ and $\eta_{pq}$
a $m\times m$ diagonal matrix with entries $\pm1$ (so in particular constant, that is independent
of $u\in U^n$). Observe that the choice of such an orthonormal frame for the normal
bundle of $M$ in $\mathbb{R}^{n+m}$ is in principle only possible locally. Indeed if 
$0\in U^n$ then by linear algebra we can choose a basis $n_1(0),\ldots,n_m(0)$ for the
orthogonal complement of the tangent vectors $\partial_1r(0),\ldots,\partial_nr(0)$ with
$n_p(0)\cdot n_q(0)=\eta_{pq}$ and subsequently apply Gram--Schmidt to the
vectors $\partial_1r(u),\ldots,\partial_nr(u),n_1(0),\ldots,n_m(0)$ for $u$ small.

As in Section~\ref{The volume of tubes} we can write
\[ \partial_ix=\sum_j(\delta_i^j-\sum t^pn_p\cdot h_i^j)\partial_jr+\ldots \]
with second fundamental form normal vectors $h_i^j=\sum g^{jk}h_{ik}$ and 
the dots $\ldots$ stand for a linear combination of the normal fields $n_p$.
Likewise writing $t_p=\sum \eta_{pq}t^q$ we arrive at the generalized tube volume formula
\[ 
V_{U^n}(a)=\int_{U^n}\{\int_{a\mathbb{D}^m}\det(\delta_i^j-
\sum t_ph_i^{jp})\,dt\}\sqrt{\det g_{ij}}\,du 
\]
with $\mathbb{D}^m$ a compact domain around $0$ and $a>0$ sufficiently small.
Hence $V_{M}(a)$ is a polynomial in $a$ of degree $m+n$ with 
$\mathrm{vol}(M)\mathrm{vol}(\mathbb{D}^m)a^m$ as lowest order term.
The Gauss equations 
\[ R_{ij}^{kl}=h_i^k\cdot h_j^l-h_j^k\cdot h_i^l \]
as derived in Section~\ref{The Gauss equations} remain valid for an indefinite scalar product.

\medskip
For Riemannian curves $M$ of dimension $n=1$ and a centrally symmetric domain
$\mathbb{D}^m$ around $0$ in $\mathbb{R}^m$ we get
\[ V_{M}(a)=\mathrm{length}(M)\mathrm{vol}(\mathbb{D}^m)a^m \]
just like the original case of Hotelling~\cite{Hotelling 1939}.
Also, if $\eta_{pq}=\delta_{pq}$ then we are essentially in the original setting of Weyl and 
his spherical tube formula and our variations hold without change.

\medskip
Let us suppose for the rest of this section that $M$ is a compact Riemannian
submanifold of a Lorentzian vector space $\mathbb{R}^{n+m-1,1}$ with scalar 
product $\cdot$ of signature $(n+m-1,1)$ and thus 
$\eta_{pq}=\mathrm{diag}(1,\ldots,1,-1)$ in $\mathbb{R}^m$. 
If we denote by $\mathbb{J}=\{x\in \mathbb{R}^{n+m-1,1}\,;x\cdot x\leq0\}$ the 
causal future and past of the origin then for $e$ a unit timelike vector the domain
$\widehat{\mathbb{D}}^{n+m}(e)=\{e+\mathbb{J}\}\cap\{-e+\mathbb{J}\}$ is 
called the \emph{causal diamond} around $0$ with unit timelike normal $e$. It is the locus 
traced out by all causal curves between $e$ and $-e$. Any two causal diamonds 
around $0$ can be transformed into each other by an element of the 
Lorentz group $\mathrm{O}_{n+m-1,1}(\mathbb{R})$, while the symmetry group of a 
causal diamond is isomorphic to 
$\mathrm{O}_{n+m-1}(\mathbb{R})\times\mathrm{O}_1(\mathbb{R})$. The set
\[ \{r+n\,;r\in M,\,n\in N_rM\cap a\widehat{\mathbb{D}}^{n+m}(n_m(r))\} \]
will be called the \emph{causal tube} with radius $a>0$ (sufficiently small) around 
$M$ relative to the unit timelike normal field $n_m$. Its volume is given by
\[ 
V_M(a)=\int_M\{\int_{a\widehat{\mathbb{D}}^m}\det(\delta_i^j-\sum t_ph_i^{jp})\,dt\}\,ds
\]
with $\widehat{\mathbb{D}}^m$ the diamond domain in $\mathbb{R}^m$ in the notation
of the previous section.

In accordance with Weyl's tube formula, apart from the $\pm$ sign, we obtain the 
following version of the tube formula for Riemannian hypersurfaces.

\begin{corollary}
For a spacelike hypersurface $M$ of codimension $m=1$ in a Lorentzian vector space
$\mathbb{R}^{n,1}$ the causal tube volume formula takes the form
\[ V_{M}(a)=2\sum_{d=0}^n \frac{(-1)^{d/2}k_d(M)a^{1+d}}
   {3\cdot5\cdots(1+d)}\qquad (d\;\mathrm{even}). \]
\end{corollary}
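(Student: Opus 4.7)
The plan is to collapse the master formula of Section~\ref{Riemannian submanifolds of a Lorentzian vector space} to a one-dimensional integral, exploiting how small the normal bundle really is in this setting. First I would observe that in codimension $m=1$ with $M$ spacelike, the normal bundle is a single timelike line spanned by the unit normal $n_1$, so $\eta_{11}=-1$; moreover, intersecting the causal diamond $a\widehat{\mathbb{D}}^{n+1}(n_1(r))$ with the normal line $N_rM$ produces the segment $\{tn_1(r):-a\le t\le a\}$ (in line with the description of the diamond as the locus of causal curves between $\pm n_1$). With the shorthand $h_i^j:=h_i^{j,1}$ and using $t_1=\eta_{11}t^1=-t$, the generalized tube volume formula of that section reduces to
\[
 V_M(a)=\int_M\Bigl\{\int_{-a}^{a}\det(\delta_i^j+th_i^j)\,dt\Bigr\}\,ds.
\]

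Next I would compute the inner integral by expanding $\det(\delta_i^j+th_i^j)=\sum_{d=0}^n\sigma_d(h)\,t^d$ with $\sigma_d$ the elementary symmetric functions of the Weingarten eigenvalues. Only even powers of $t$ survive integration over $[-a,a]$, and those give $2\sum_{d\;\mathrm{even}}\sigma_d(h)\,a^{d+1}/(d+1)$. To rewrite $\sigma_d(h)$ intrinsically I would apply Theorem~\ref{integrand average theorem} with $m=1$: averaging over $\mathrm{O}_1(\mathbb{R})=\{\pm1\}$ simply symmetrises $t\mapsto-t$, and the theorem delivers the identity $\sigma_d(h)=\widetilde H_d/(1\cdot 3\cdots(d-1))$, where $\widetilde H_d$ is the Weyl combination built from the auxiliary tensor $\widetilde R_{ij}^{kl}=h_i^kh_j^l-h_j^kh_i^l$ with Euclidean signs.

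The final step accounts for the Lorentzian twist. The Gauss equations of Section~\ref{Riemannian submanifolds of a Lorentzian vector space}, $R_{ij}^{kl}=h_i^k\cdot h_j^l-h_j^k\cdot h_i^l$, collapse for $m=1$ with timelike normal to $R_{ij}^{kl}=-\widetilde R_{ij}^{kl}$; the minus sign propagates through each of the $d/2$ curvature factors in the complete contraction defining $H_d$, giving $\widetilde H_d=(-1)^{d/2}H_d$, where $H_d$ is the genuine intrinsic invariant of $M$ as a Riemannian manifold. Substituting and absorbing the leading unit factor via $1\cdot 3\cdots(d+1)=3\cdot 5\cdots(d+1)$ then produces the stated formula with $k_d(M)=\int_MH_d\,ds$. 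The whole argument is sign bookkeeping, and the only place where one can slip is conflating the $-1$ coming from $t_1=-t^1$ in the substitution (which only relabels odd powers of $t$ that integrate to zero over $[-a,a]$) with the $-1$ coming from the Gauss equations (which genuinely produces the alternating factor $(-1)^{d/2}$ in the answer).
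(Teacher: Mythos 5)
Your argument is correct and follows essentially the same route as the paper: the paper's own justification is exactly the observation that for a timelike unit normal one has $H_{ij}^{kl}=h_i^k\cdot h_j^l=-h_i^kh_j^l$, so each of the $d/2$ curvature factors in $H_d$ contributes a sign and $k_d(M)$ picks up $(-1)^{d/2}$, combined with the $m=1$ specialization of the spherical tube formula (where $\Omega_1=2$ and $(m+2)\cdots(m+d)$ becomes $3\cdot5\cdots(1+d)$). Your write-up merely makes explicit the reduction to the one-dimensional integral over $[-a,a]$ and the sign bookkeeping that the paper leaves implicit.
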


Indeed if $h_{ij}$ is the scalar valued second fundamental form then 
$H_{ij}^{kl}=-h_i^kh_j^l$
and so $R_{ij}^{kl}$ in Theorem~\ref{integrand average theorem} also picks up a minus sign,
that is $H_d$ and $k_d(M)=\int_M H_d\,ds$ pick up a factor $(-1)^{d/2}$.

There is yet another case, where the causal tube formula has an intrinsic form, namely 
in case $M\hookrightarrow\mathbb{R}^{n+m-1}\hookrightarrow\mathbb{R}^{n+m-1,1}$.
This can be checked easily using Weyl's tube formula in a straightforward way.

The next example shows, however, that the positive result for diamond tubes for 
$\dim M = \codim M = 2$ of Section~\ref{No go results for diamant domains} cannot 
be extended to the Lorentzian setting.

\begin{example}\label{example n=2, m=2}
If we specialize to the case $n=m=2$ and $\eta_{pq}=\mathrm{diag}(1,-1)$ of a 
compact spacelike surface $M$ in Minkowski spacetime $\mathbb{R}^{3,1}$ then the integrand
\[ 
\det(\delta_i^j-\sum t_ph_{i}^{jp}) = 
1-\sum t_p(h_{1}^{1p}+h_{2}^{2p})+\sum t_pt_q
(h_{1}^{1p}h_{2}^{2q}-h_{1}^{2p}h_{2}^{1q}) 
\]
averages over the symmetry group $W(\mathrm{B}_2)$ of the square 
$\widehat{\mathbb{D}}^2$ as in Example~\ref{example n=2, m>2} to
the expression
\[ 
1+(A(h_i^j)+B(h_i^j))(t_1^2+t_2^2)/2
\]
with $A=h_1^{11}h_2^{21}-h_1^{21}h_2^{11}$ and $B=h_1^{12}h_2^{22}-h_1^{22}h_2^{12}$.
Since
\[ \int_{a\widehat{\mathbb{D}}^2}dt_1\,dt_2=2a^2,
   \quad\int_{a\widehat{\mathbb{D}}^2}t_1^2\,dt_1\,dt_2=
    \int_{a\widehat{\mathbb{D}}^2}t_2^2\,dt_1\,dt_2=a^4/3 \]
we find
\[ V_M(a)=\int_M\{2a^2+(A+B)a^4/3\}\,ds =
\mathrm{area}(M)\,2a^2+\int_M(A+B)\,ds\,a^4/3 \]    
for the volume of the causal tube along $M$.

On the other hand, the Gauss equation (for n=2 there is just a single one) in this
particular case of spacelike surfaces in Minkowski spacetime becomes
\[ R_{12}^{12}=h_1^1\cdot h_2^2-h_1^2\cdot h_2^1=A-B. \]
Since $\int_M(A+B)\,ds$ enters in the tube volume formula while 
$\int_M(A-B)\,ds$ is the total Gauss curvature for $M$, the volume formula 
for causal tubes around surfaces need not be intrinsic. 
\end{example}

The conclusion is that for spacelike submanifolds of Minkowski spacetime 
$\mathbb{R}^{3,1}$ the causal tube volume formula will in general no longer 
be intrinsic, except for the obvious cases of spacelike curves ($n=1$) or 
hypersurfaces ($m=1$). This question about the intrinsic nature of causal tube 
volume formulas was the starting point for our work.

\section{Pappus type theorems}\label{Pappus type theorems}

Let us denote the graded commutative algebra $\mathbb{R}[t_1,\ldots,t_m]$
by $P=\oplus\,P^d$. The subalgebra of invariants for $\mathrm{O}_m(\mathbb{R})$ 
is equal to $\mathbb{R}[t_1^2+\ldots+t_m^2]$ and is denoted $I=\oplus\,I^d$. 
The graded subspace 
\[ C=\{p\in P\,;\int_{\mathrm{O}_m(\mathbb{R})}\,g(p)\,d\mu(g)=0\}=\oplus\,C^d \]
is the unique invariant complement of $I$ in $P$. Here $\mu$ is the normalized Haar 
measure on $\mathrm{O}_m(\mathbb{R})$. Hence $P=I\,\oplus\,C$ and clearly
$C^d=P^d$ for $d$ odd while $C^d$ has codimension one in $P^d$ for $d$ even.

\begin{definition}
A compact domain $\mathbb{D}^m$ in $\mathbb{R}^m$ is called 
\emph{symmetric of degree $n$} if 
\[ \int_{\mathbb{D}^m}\,p(t)\,dt=0 \]
for all polynomials $p\in C^1\oplus\ldots\oplus C^n$.
\end{definition}
If the compact domain $\mathbb{D}^m$ has a symmetry group
$G_m$ that is orthogonal of degree $n$ (in the sense of our 
Definition~\ref{orthogonal of degree n definition}) then
\[ 
\int_{\mathbb{D}^m}\,p(t)\,dt=\int_{\mathbb{D}^m}\,\langle p(t)\rangle_{G_m}\,dt=
\int_{\mathbb{D}^m}\,\langle p(t)\rangle_{\mathrm{O}_m(\mathbb{R})}\,dt 
\]
for all polynomials $p(t)$ of degree $\leq n$. In particular, if the symmetry group 
$G_m$ of $\mathbb{D}^m$ is orthogonal of degree $n$ then the domain 
$\mathbb{D}^m$ is necessarily symmetric of degree $n$. From the discussions 
in Section~\ref{The volume of tubes} and Section~\ref{Averaging the integrand}
it follows that our Theorem~\ref{abstract tube formula} holds with the condition on
the symmetry group $G_m$ of $\mathbb{D}^m$ being orthogonal of degree $n$
replaced by the condition on $\mathbb{D}^m$ being symmetric of degree $n$.
This more general form of Theorem~\ref{abstract tube formula} was obtained 
as Theorem~4.4 in \cite{Domingo-Juan--Miquel 2004}.

A compact domain $\mathbb{D}^m$ in $\mathbb{R}^m$ is symmetric of degree 
$1$ if and only if the center of mass of $\mathbb{D}^m$ lies at the origin. 
Hence the condition for $\mathbb{D}^m$ to be symmetric of degree $1$ is a good 
deal more general than the condition for the symmetry group $G_m$ to be orthogonal 
of degree $1$. If the manifold $M$ is a circle in $\mathbb{R}^3$ then the tube volume 
formula boils down to the ancient Pappus's centroid theorem. For this reason the higher 
dimensional tube volume formulas are sometimes also called Pappus type theorems.

The next example shows that for a planar domain $\mathbb{D}^2$ and
for all $n\geq 1$ the notion for $\mathbb{D}^2$ to be symmetric of degree $n$ 
is strictly weaker than the notion for the symmetry group $G_2$ of $\mathbb{D}^2$ 
being orthogonal of degree $n$.

\begin{example}
Consider in polar coordinates $t_1=r\cos\phi,t_2=r\sin\phi$ the planar domain
$\mathbb{D}^2=\{(r,\phi)\,;0\leq r\leq a(\phi),\phi\in\mathbb{R}/2\pi\mathbb{Z}\}$ 
for some continuous function $a\colon\mathbb{R}/2\pi\mathbb{Z}\rightarrow(0,\infty)$.
The space $C^d$ is spanned by the functions $r^d\cos(e\phi)$ and $r^d\sin(e\phi)$ 
with $1\leq e\leq d$ and $e\equiv d$ (mod $2$). The condition that $\mathbb{D}^2$
is symmetric of degree $n$ amounts to
\[
\int_0^{2\pi}\int_0^{a(\phi)}\,r^{d+1}\,dr\cos(e\phi)\,d\phi=
\int_0^{2\pi}\int_0^{a(\phi)}\,r^{d+1}\,dr\sin(e\phi)\,d\phi=0
\]
or equivalently
\[
\int_0^{2\pi}(a(\phi))^{d+2}\cos(e\phi)\,d\phi=
\int_0^{2\pi}(a(\phi))^{d+2}\sin(e\phi)\,d\phi=0
\]
for all $1\leq d\leq n$, $1\leq e\leq d$ and $e\equiv d$ (mod $2$). Clearly these
conditions are satisfied if for some $k>n$ the function $a(\phi)$ is invariant under 
the cyclic group $C_k$ of order $k$ acting on the circle $\mathbb{R}/2\pi\mathbb{Z}$ 
by rotations. Indeed, in that case the Fourier coefficients of all functions $a(\phi)^{d+2}$
vanish for modes not contained in $k\mathbb{Z}$. This is in accordance with our
Theorem~\ref{abstract tube formula} since the symmetry group $C_k$ of this domain $\mathbb{D}^2$
is orthogonal of degree $k>n$.

However, if for a fixed $n\geq1$ one chooses integers $p>n$ and $q>(n+3)p$ then the
function $a(\phi)=b(\phi)(2+\cos(p\phi))$ with $b>0$ invariant under $C_q$ has
the property that the Fourier coefficients of all functions $a(\phi)^{d+2}$ for 
$1\leq d\leq n$ vanish for modes $\pm1,\dots,\pm n$. Hence this domain $\mathbb{D}^2$ 
is certainly symmetric of degree $n$. On the other hand, if we pick $p$ and $q$ 
relatively prime then the symmetry group $G_2$ of $\mathbb{D}^2$ will be trivial in 
case $b(\phi)$ is chosen sufficiently general (so that the symmetry group for 
$b(\phi)$ is not larger than $C_q$), and $G_2=\{1\}$ is not orthogonal of any 
degree $n\geq 1$. 
\end{example}

The examples obtained in Proposition~4.3 of \cite{Domingo-Juan--Miquel 2004} of compact domains 
$\mathbb{D}^m$ in $\mathbb{R}^m$ that are symmetric of degree $n$
are for $n\geq2$ domains $\mathbb{D}^2$ with dihedral symmetry and
for $n=2,3$ domains $\mathbb{D}^m$ with hyperoctahedral symmetry, besides
of course the unit ball $\mathbb{B}^m$ for all $n$. 
Hence apart from giving a pedestrian exposition of Weyl's tube volume formula and also
a discussion of tube volume formulas for Riemannian submanifolds of a Lorentzian vector 
space our paper gives a more complete and transparent discussion in 
Section~\ref{Polyhedral examples} of examples based on symmetry of cross sections 
$\mathbb{D}^m$ for which the intrinsic tube volume formula holds.

\end{document}